\documentclass[psamsfonts]{amsart}
\usepackage[utf8]{inputenc}
\usepackage{prettyref}
\usepackage{mathtools}
\usepackage{amstext}
\usepackage{amsthm}
\usepackage{amssymb}

\makeatletter
\numberwithin{equation}{section}
\numberwithin{figure}{section}

\usepackage{amsfonts}\usepackage[all,arc]{xy}
\usepackage{enumerate}
\usepackage{mathrsfs}

\newtheorem{thm}{Theorem}[section]
\newtheorem{cor}[thm]{Corollary}

\theoremstyle{definition}

\theoremstyle{remark}
\newtheorem{rem}[thm]{Remark}

\numberwithin{equation}{section}

\makeatother

\begin{document}
\title[Inverse spectral results]{Inverse spectral results for the magnetic Schrödinger operator in
low dimensions }
\author{Devon Farrell, Anna Holloway \& Nikhil Savale}
\address{School of Mathematics, Trinity College Dublin, Dublin 2, Ireland}
\email{farred22@tcd.ie, hollowan@tcd.ie \& savalen@tcd.ie}
\thanks{The work presented here was carried out while D.F. and A.H. were students
in the Hamilton Trust Summer Research Internship of the school of
mathematics at Trinity College Dublin; with N.S. as their mentor.
It is a pleasure to acknowledge the financial support of the Hamilton
Trust.}
\begin{abstract}
We consider the semiclassical magnetic Schrödinger operator in dimensions
one and two. The first result is that an even one dimensional semiclassical
potential is spectrally determined by the Schrödinger operator. The
second result states that a two dimensional radial magnetic field
is spectrally determined by a purely magnetic Schrödinger operator.
This extends the earlier work of Guillemin-Wang \cite{Guillemin-Wang-2012}
and answers a conjecture of theirs therein.
\end{abstract}

\maketitle
\tableofcontents{}

\section{Introduction}

For the Laplacian on a bounded domain $\Omega\subset\mathbb{R}^{n}$,
one studies the eigenvalue problem with Dirichlet boundary conditions.
The boundedness of $\Omega$ confines the eigenfunctions $u$, ensuring
that the spectrum of $\Delta$ is discrete and grows to infinity.
In fact, the geometry of $\Omega$ is deeply encoded in this spectrum,
prompting Kac's famous "Can you hear the shape of a drum?" question
\cite{Kac1966} : inferring information about the shape of a drum
from the frequencies it can vibrate at, or more generally, inferring
information about $\Omega$ from the spectrum of the Laplacian in
the space. 

\subsection{Schrödinger operator}

Analogously, one can study the eigenvalue problem 
\begin{equation}
S_{h}u=\lambda u,
\end{equation}
where $S_{h}$ is the semiclassical Schrödinger operator given by
\begin{equation}
S_{h}=-h^{2}\Delta+V(x),\label{eq:Schrodinger operator}
\end{equation}
where $V(x)\in C^{\infty}(\mathbb{R}^{n})$ is a potential function
and $\Delta$ is the Laplacian on $\mathbb{R}^{n}$. We say $V$ is
a potential well if $|V(x)|\to+\infty$ as $|x|\to+\infty$. A result
of K. Friedrichs gives us that a potential well $V(x)$ confines eigenfunctions
$u$, ensuring that the spectrum \{$\lambda_{j}$\} of $S_{h}$ is
discrete and accumulates at infinity. One can similarly infer information
about $S_{h}$ from this spectrum, namely information about the potential
$V$. This is an analogous problem to Kac's question for Schrödinger
operators. It is furthermore of fundamental interest in quantum physics. 

In \cite{Guillemin-Uribe-2007}, Guillemin and Uribe show that any
real analytic potential $V$, symmetric with respect to all coordinate
axes and with a unique global minimum, is determined within the class
of all such potentials by the spectrum at the bottom of the well of
its semiclassical Schrödinger operator. The discreteness of the spectrum
\{$\lambda_{j}$\} allows the operator trace to be expressed as a
countable sum. In \cite{Guillemin-Wang-2012} (see also \cite{Dimassi-Sjostrand}),
such an asymptotic trace expansion 
\begin{equation}
\textnormal{tr}(f(S_{h}))\sim(2\pi h)^{-n}\sum^{\infty}_{k=0}\nu_{k}(f)h^{2k},\label{eq:functional trace expansion Schrodinger}
\end{equation}
for $f\in C^{\infty}_{c}\left(\mathbb{R}\right)$, is given, where
the coefficients $\nu_{k}(f)$ are semiclassical spectral invariants,
determined entirely by the spectrum of $S_{h}$. A recursive algorithm
for computing higher order coefficients is also given.

The problem of recovering the potential $V$ from the Schrödinger
operator is fundamental in mathematical physics. It is already non-trivial
in one dimension. In \cite{cdv-guillemin-2011} a form of the expansion
is used to obtain an inverse spectral result for $V$: showing that
if $c\in[0,a)$ is an isolated critical value of $V$ and $V^{-1}(c)$
is a single non-degenerate critical point, $p$, then the first two
terms of the expansion determine the Taylor series of $V$ at $p$,
and hence, if $V$ is analytic in a neighbourhood of $p$, determine
$V$ itself in this neighbourhood. In \cite[Thm. 1.2]{Guillemin-Wang-2012},
Guillemin and Wang compute terms of this expansion for the perturbed
Schrödinger operator 
\begin{equation}
P_{h}=-h^{2}\Delta+V(x)+h^{2}V_{2}(x)
\end{equation}
obtaining inverse spectral results for $V$ and $V_{2}$. Namely,
they show that both $V$ and $V_{2}$ are spectrally determined under
certain asymmetry conditions on them.

Our first main result uses the same trace expansion in \prettyref{eq:functional trace expansion Schrodinger}
to extend the above theorem from perturbations of the form $V(x)+h^{2}V_{2}(x)$
to arbitrary semiclassical potentials of the form 
\begin{equation}
V_{h}\sim V_{0}+hV_{1}+h^{2}V_{2}+\dots.\label{eq:semiclassical expansion}
\end{equation}
 Our theorem is the following.
\begin{thm}
\label{thm: Theorem 1} Suppose the one dimensional potential function
$V_{h}$ has a semiclassical expansion \prettyref{eq:semiclassical expansion},
where $V_{0}$ is a potential well with each $V_{k}$ is even and
$V_{0}'\left(x\right)>0$ for $x>0$. Then $V_{0},V_{1},V_{2},\dots$are
determined by the spectrum of the Schrödinger operator $S_{h}:=-h^{2}\Delta+V_{h}(x)$.
\end{thm}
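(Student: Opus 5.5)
Let $\lambda_j(h)$ denote the eigenvalues of $S_h$. The plan is to use the trace formula \prettyref{eq:functional trace expansion Schrodinger}, extended to the semiclassical potential $V_h$, and to read off the $V_k$ one at a time, order by order in $h$. For $f\in C^\infty_c(\mathbb{R})$ the trace $\textnormal{tr} f(S_h)=\sum_j f(\lambda_j(h))$ is a spectral invariant, and the symbol calculus gives an expansion
\begin{equation*}
\textnormal{tr} f(S_h)\sim (2\pi h)^{-1}\sum_{m\geq 0} h^m \int_{\mathbb{R}^2} a_m(x,\xi)\, dx\, d\xi .
\end{equation*}
Each integrand $a_m$ is a finite sum of terms $f^{(l)}(\xi^2+V_0)$ times polynomials in $V_1,\dots,V_m$ and in derivatives of the $V_k$. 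The basic structural point to establish is this: the only place $V_m$ enters $a_m$ is linearly, through the term $f'(\xi^2+V_0)V_m$. Every other term involves only $V_0,\dots,V_{m-1}$. This follows by expanding $f(\xi^2+V_0+hV_1+\dots)$ in Taylor series around $\xi^2+V_0$ and noting that the Moyal/Weyl corrections bring extra powers of $h$. I will not compute the corrections explicitly. It is enough to know they are universal expressions in lower-order data.

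\emph{Leading order.} The $m=0$ term is $\int f(\xi^2+V_0)\,dx\,d\xi$. Integrating out $\xi$, this determines for every $f$ the quantity $\int f(\lambda)\,\frac{d}{d\lambda}\mathrm{Area}\{\xi^2+V_0\leq\lambda\}\,d\lambda$. Hence the function $\lambda\mapsto \int (\lambda-V_0)_+^{1/2}\,dx$ is known. Because $V_0$ is even and increasing on $x>0$, this equals $2\int_0^{\infty}(\lambda-V_0(x))_+^{1/2}\,dx$. Substituting $x=x(v)$, the inverse of $V_0|_{(0,\infty)}$, turns it into an Abel transform of $x'(v)$. Abel inversion recovers $x(v)$ up to the additive constant fixed by the minimum value $V_0(0)=\min V_0$, which is itself the bottom of the support of the known measure. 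So $V_0$ on $x>0$ is determined, and by evenness on all of $\mathbb{R}$. This is the classical one-dimensional argument.

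\emph{Inductive step.} Suppose $V_0,\dots,V_{m-1}$ are already known. Then the $h^{m-1}$ coefficient of $\textnormal{tr} f(S_h)$ determines $\int f'(\xi^2+V_0(x))\,V_m(x)\,dx\,d\xi$ for all $f$. Integrating in $\xi$ gives, for every $g=f'$, the value of $\int g(\lambda)\,\frac{d}{d\lambda}\int (\lambda-V_0)_+^{1/2}\,V_m\,dx\, d\lambda$ up to a constant. Therefore $F(\lambda)=\int_{V_0\le\lambda}(\lambda-V_0(x))^{1/2}V_m(x)\,dx$ is known. Using evenness of $V_m$ and the same change of variables $v=V_0(x)$ on $x>0$, we get
\begin{equation*}
F(\lambda)=2\int_{V_0(0)}^{\lambda}(\lambda-v)^{1/2}\,V_m(x(v))\,x'(v)\,dv .
\end{equation*}
This is again an Abel-type transform of the function $V_m(x(v))x'(v)$. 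Abel inversion determines that function, and since $x(v)$ is known, $V_m$ is determined on $x>0$ and hence everywhere by evenness. Notice that evenness is what makes the problem well posed: otherwise only the sum $V_m(x_+(v))|x_+'|+V_m(x_-(v))|x_-'|$ over the two branches would be determined.

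The main obstacle is technical rather than conceptual. First, one must justify the trace expansion for a semiclassical potential with $V_0$ a well, including localisation to a compact energy window and the fact that the $x'(v)$ singularity at $v=V_0(0)$ is integrable. Since $x'(v)$ blows up at the minimum, the Abel transforms have to be handled as distributions or Stieltjes integrals, as in Guillemin--Wang's treatment. Second, one must make the structural claim about $a_m$ precise. I would do this by writing $S_h=\mathrm{Op}_h(\xi^2+V_h)$ in Weyl quantisation and building the functional calculus through the Helffer--Sjöstrand formula with parametrix symbols. Each correction is then a polynomial in derivatives of the full symbol multiplied by extra powers of $h$, so it cannot contain $V_m$ at order $h^m$ except through the principal Taylor term.
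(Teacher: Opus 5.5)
Your proposal is correct and follows essentially the same route as the paper: the order-$h^m$ invariant is linear in $V_m$ through $\int f'(p_0)V_m\,dx\,d\xi$ modulo terms in $V_0,\dots,V_{m-1}$, and at each stage evenness plus the substitution $v=V_0(x)$ reduces the problem to an Abel transform, which is then inverted. The differences are cosmetic. You justify the structural lemma via the Weyl calculus rather than the paper's transport recursion $b_m=F(V_0,\dots,V_{m-1})+tV_m$. You also work with general test functions and an arbitrary minimum value $V_0(0)$, whereas the paper uses $f=\chi_{[0,\lambda]}$ and implicitly takes $V_0(0)=0$.
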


This had earlier been conjectured by Guillemin-Wang \cite[pg. 105]{Guillemin-Wang-2012}.
As a Corollary, one obtains a generalization of the result of \cite{cdv-guillemin-2011}
for $h-$dependent potentials.
\begin{cor}
Suppose $V_{h}$ is an even, $h-$dependent one-dimensional potential
that is real analytic in $h\in\left[0,1\right]$. Suppose $V_{0}$
is a potential well with $V_{0}'\left(x\right)>0$ for $x>0$. Then
$V_{h}$ is determined by the spectrum of the Schrödinger operator
\prettyref{eq:Schrodinger operator} $S_{h}:=-h^{2}\Delta+V_{h}(x)$.
\end{cor}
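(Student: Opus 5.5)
The plan is to deduce the Corollary directly from Theorem \ref{thm: Theorem 1}. The Taylor coefficients of $V_h$ in $h$ will play the role of the $V_k$, and real analyticity in $h$ will pass from the whole Taylor series back to $V_h$ itself.

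\textbf{Step 1: extract the coefficients.} For each fixed $x$, $h\mapsto V_h(x)$ is real analytic on $[0,1]$, so near $h=0$
\begin{equation*}
V_h(x)=\sum_{k=0}^{\infty}V_k(x)h^k,\qquad V_k(x):=\frac{1}{k!}\partial_h^kV_h(x)\big|_{h=0}.
\end{equation*}
Each $V_k$ is even, since $V_h(-x)=V_h(x)$ for every $h$ and we may differentiate this identity in $h$ at $h=0$. The leading term $V_0=V_h|_{h=0}$ is by hypothesis a potential well with $V_0'(x)>0$ for $x>0$. The convergent series is in particular an asymptotic expansion of the form \prettyref{eq:semiclassical expansion}.

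\textbf{Step 2: check that the expansion is of the type Theorem \ref{thm: Theorem 1} requires.} This is the step I expect to need the most care. One must verify that
\begin{equation*}
V_h-\sum_{k<N}V_kh^k=O(h^N)
\end{equation*}
in the sense used in the trace expansion \prettyref{eq:functional trace expansion Schrodinger}, with derivatives in $x$, and not merely pointwise in $x$. I would first assume joint smoothness of $(x,h)\mapsto V_h(x)$, which is implicit in calling $V_h$ a smooth $h$-dependent potential. Taylor's theorem with integral remainder in $h$ then gives the bound locally uniformly in $x$, together with all $x$-derivatives.

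Local uniformity suffices for the following reason. For $f\in C_c^\infty(\mathbb{R})$ the trace $\mathrm{tr}\,f(S_h)$ only sees the compact classically allowed region $\{V_0\le \sup\operatorname{supp}f+1\}$, because $V_0$ is a well. Outside this region the eigenfunctions are $O(h^\infty)$ by Agmon-type estimates, so the spectral invariants $\nu_k(f)$ depend only on the $V_k$ on compact sets.

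\textbf{Step 3: identify the $V_k$, then $V_h$.} With Steps 1 and 2 in hand, Theorem \ref{thm: Theorem 1} applies. Hence every $V_k$ is determined by the spectrum of $S_h$. Suppose $\tilde V_h$ is another potential satisfying the hypotheses with the same spectrum. Then $\tilde V_k=V_k$ for all $k$, so for each $x$ the real analytic functions $h\mapsto V_h(x)$ and $h\mapsto\tilde V_h(x)$ have identical Taylor series at $h=0$. They therefore agree on a neighbourhood of $0$. By the identity theorem for real analytic functions on the connected interval $[0,1]$, they agree for all $h\in[0,1]$. This gives $V_h=\tilde V_h$.
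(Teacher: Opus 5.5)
Your route is the one the paper intends; the paper offers nothing beyond calling the statement a corollary of Theorem \ref{thm: Theorem 1}. You take the Taylor coefficients in $h$, apply Theorem \ref{thm: Theorem 1}, and finish with the identity theorem, and Steps 1 and 3 are fine. The gap is in Step 2. You are right to single it out, but your justification for it is false.

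Locally uniform control of the Taylor remainder does not localize $\operatorname{tr}f(S_h)$ to $\{V_0\le\sup\operatorname{supp}f+1\}$. The classically allowed region is $\{V_h\le E\}$, and locally uniform convergence $V_h\to V_0$ does not stop $V_h$ from dropping below $E$ at points that escape to infinity as $h\to0$. The Agmon estimate you invoke needs $V_h\ge E+\delta$ outside a fixed compact set, uniformly in $h$, and nothing in the hypotheses provides this. Concretely, take
\[
V_h(x)=x^2(1-hx^2)^2=x^2-2hx^4+h^2x^6 .
\]
This potential satisfies the hypotheses:
\begin{itemize}
\item it is even and polynomial in $h$;
\item $V_0=x^2$ is a well with $V_0'>0$ for $x>0$;
\item each $V_h$ is itself a well, so $S_h$ has discrete spectrum.
\end{itemize}
But $V_h$ vanishes at $x=\pm h^{-1/2}$. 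Writing $x=\pm h^{-1/2}+z$, one has $V_h=4z^2+O(h^{1/2})$ for bounded $z$, and the barriers separating these wells from the origin have height $4/(27h)$. These escaping wells add eigenvalues $\approx 2h(2n+1)$ to every fixed energy window. As a result, the coefficient of $(2\pi h)^{-1}$ in $\operatorname{tr}f(S_h)$ is $\int f(\xi^2+x^2)\,dx\,d\xi+2\int f(\xi^2+4z^2)\,dz\,d\xi$, not $\nu_0(f)=\int f(\xi^2+V_0)\,dx\,d\xi$. So the trace formula behind Theorem \ref{thm: Theorem 1} fails, even though every hypothesis of the Corollary, read literally, holds.

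Hence Step 2 cannot be derived from the stated hypotheses. It has to be assumed, as the paper tacitly does by calling \eqref{eq:semiclassical expansion} a semiclassical expansion. There are two ways to state the assumption. The stronger is to require the remainders $V_h-\sum_{k<N}h^kV_k$ to be $O(h^N)$ with all $x$-derivatives globally, relative to an order function such as $1+V_0$. The weaker is a uniform lower bound $V_h\ge cV_0-C$ with $c>0$ for small $h$. Under the weaker one, $\{V_h\le E+\delta\}$ lies in a fixed compact set. Your Agmon localization and local Taylor estimates then go through, and the rest of your argument is correct.
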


It is likely that our methods work for radial semiclassical potentials
$V_{h}$ in arbitrary $n$- dimensions. 

\subsection{Magnetic Schrödinger operator}

Analogous inverse spectral questions arise for Schrödinger operators
in a magnetic field, 
\begin{equation}
\hat{S}_{h}=\sum^{n}_{j=1}\left(ih\frac{\partial}{\partial x_{j}}+a_{j}\right)^{2}+V(x)\label{eq:magnetic schrodinger}
\end{equation}
as in \cite[Sec. 7]{Guillemin-Wang-2012}, where 
\begin{equation}
B=da=\sum_{j<l}(\frac{\partial}{\partial x_{l}}a_{j}-\frac{\partial}{\partial x_{j}}a_{l})\,dx_{j}\wedge dx_{l}
\end{equation}
is the associated magnetic field. First, we consider the purely magnetic
case: when $V(x)=0$. We say $B$ is a magnetic potential well if
$|B(x)|\to+\infty$ as $|x|\to+\infty$. For such magnetic fields
the operator $\hat{S}_{h}$ is known to have a discrete spectrum \cite[Ch. 3]{Agmon-book-1982}.

Furthermore, it follows from standard methods (eg. \cite[Thm. 8]{Marinescu-Savale18}
, \cite[Thm. 1.1]{Ma-Marinescu-Zelditch-2015}) that, for such $\hat{S}_{h}$,
one has the trace expansion 
\begin{equation}
\textnormal{Tr}f(e^{-\frac{t\hat{S}_{h}}{h}})\sim h^{-n/2}[c^{f}_{0}+c^{f}_{1}h+c^{f}_{2}h^{2}+\dots],\label{eq:rescaled heat kernel exp.}
\end{equation}
for $f\in C^{\infty}_{c}\left(\mathbb{R}\right)$, where the coefficients
$c_{k}\left(f\right)$ are semiclassical spectral invariants. The
leading term can be calculated. In the case when $B=|B(x)|$ is radial,
it is seen to be 
\begin{equation}
c_{0}=\int^{\infty}_{0}r\tilde{f}(B)\,dr\label{eq:resc. heat kernel 1t 2D}
\end{equation}
in the $2$-dimensions. And 
\begin{equation}
c_{0}=\int^{\infty}_{0}r^{2}\tilde{f}(B)\,dr\label{eq:resc. heat kernel 1t 3D}
\end{equation}
in the 3-dimensions. Here 
\begin{equation}
\tilde{f}(B)=\begin{cases}
B[f(B)+f(3B)+f(5B)+\dots] & \text{if }B>0\\
\int^{\infty}_{0}f(s)\,ds & \text{if }B=0.
\end{cases}.\label{eq:1t component}
\end{equation}

We then use these coefficients to obtain the following inverse spectral
result in 2 dimensions:
\begin{thm}
\label{thm: Theorem 3} Let $B\in\Omega^{2}$ be the associated magnetic
field of the purely magnetic Schrödinger operator $\hat{S}_{h}$ \prettyref{eq:magnetic schrodinger}
in 2 dimensions. Suppose $B\left(r\right)=\left|B\right|$ is radially
symmetric magnetic field, with $B^{\prime}(r)>0$ and $B(r)\to\infty$
as $r\to\infty$. Then the spectrum of $\hat{S}_{h}$ uniquely determines
$B(r)$.
\end{thm}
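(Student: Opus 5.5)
Since the coefficient $c_0$ in \prettyref{eq:rescaled heat kernel exp.} is a spectral invariant for every $f\in C^{\infty}_{c}\left(\mathbb{R}\right)$, the plan is to show that the linear functional
\[
f\mapsto c_0(f)=\int_0^\infty r\,\tilde f(B(r))\,dr
\]
from \prettyref{eq:resc. heat kernel 1t 2D} already determines $B$. Because $B'>0$ and $B\to\infty$, I will change variables $b=B(r)$ with inverse $r=R(b)$ on $[B_0,\infty)$, where $B_0=B(0)\ge 0$ and $R$ is increasing with $R(B_0)=0$. This gives
\[
c_0(f)=\int_{B_0}^\infty \tilde f(b)\,R(b)R'(b)\,db=\tfrac12\int_{B_0}^\infty \tilde f(b)\,d\mu(b),\qquad \mu(b):=R(b)^2 .
\]
Knowing $\mu$ on $[B_0,\infty)$ is the same as knowing $B$, since $r=\sqrt{\mu(b)}$ inverts to $b=B(r)$. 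So the statement reduces to recovering $B_0$ and the function $\mu$ from these integrals.

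\emph{Step 1: pass to a spectral density.} Substituting the definition \prettyref{eq:1t component} (the $B=0$ case occurs only at $r=0$ if $B_0=0$, a null set) gives
\[
2c_0(f)=\sum_{k\ge0}\int_{B_0}^\infty b\,f((2k+1)b)\,d\mu(b)=\int_0^\infty f(s)\,d\nu(s).
\]
Here $\nu=\sum_k \nu_k$, and $\nu_k$ is the pushforward of $b\,d\mu(b)$ under $b\mapsto(2k+1)b$, so that $d\nu_k(s)=\frac{s}{(2k+1)^2}\,d\mu\!\left(\frac{s}{2k+1}\right)$. Since $f\in C_c^\infty$ is arbitrary, the spectrum determines the measure $\nu$ on $(0,\infty)$. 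The sum is locally finite: $\nu_k$ is supported in $[(2k+1)B_0,\infty)$, and $\mu$ grows at most polynomially on compacts. Equivalently, the distribution function $N(s)=\nu((0,s])$ is known, with
\[
N(s)=\sum_{k\ge0}\int_{B_0}^{s/(2k+1)} b\,d\mu(b)=\sum_{k\ge 0}G\!\left(\tfrac{s}{2k+1}\right),\qquad G(t):=\int_{B_0}^{t} b\,d\mu(b),
\]
where $G(t)=0$ for $t\le B_0$.

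\emph{Step 2: recover $B_0$.} The infimum of the support of $\nu$ is $B_0$. This is clear when $B_0>0$, since $\mu$ is strictly increasing there and so $G>0$ just above $B_0$. When $B_0=0$, the support of $\nu$ reaches down to $0$.

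\emph{Step 3: invert the dilation sum.} This is the main obstacle, and I would handle it by Möbius-type inversion over odd integers. I claim $G(t)=\sum_{m\text{ odd}}\mu_{\rm odd}(m)\,N(t/m)$, where $\mu_{\rm odd}$ is the Möbius function restricted to odd $m$. To verify this, substitute the formula for $N$: the coefficient of $G(t/n)$ is $\sum_{m\mid n}\mu(m)=\delta_{n1}$ for odd $n$, using that the odd integers are multiplicatively closed. The sums are finite for fixed $t$ when $B_0>0$, since only $m<t/B_0$ contribute.

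When $B_0=0$ the sums are infinite, and I would instead take a Mellin transform. Using $s=(2k+1)b$, one gets $\int s^{-z}\,d\nu(s)=(1-2^{-z})\,\zeta(z)\int b^{1-z}\,d\mu(b)$ for $\operatorname{Re}z$ large enough to have convergence near $0$ (and with test functions localizing at infinity). Dividing by $(1-2^{-z})\zeta(z)$, which has no zeros for $\operatorname{Re} z>1$, determines the Mellin transform of $b\,d\mu$ and hence $\mu$ by Mellin inversion. To avoid growth issues at infinity one can apply this to $f$ supported in $(0,T)$ and truncate, since only $b<T$ matter.

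Having $G$, we get $d\mu=b^{-1}dG$ and $\mu(B_0)=0$, hence $\mu$, hence $R$, hence $B$. The delicate points I expect are justifying convergence of the inversion sums or transforms near $b=0$ when $B_0=0$, and checking that the measure-level identification of $\nu$ from all $f\in C_c^\infty$ is legitimate. Both seem manageable by localizing $f$ away from $0$ and using that everything is determined on each interval $(0,T)$.
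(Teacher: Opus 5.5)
The gap is the case $B_0=0$, which the theorem allows; your argument for $B_0=B(0)>0$ is sound and is the paper's argument. There your $G$ and $N$ are twice the paper's $R$ and $\tilde R$, and the paper inverts $\tilde R(\lambda)=\sum_n R(\lambda/(2n+1))$ with the M\"obius function restricted to odd integers, exactly as you do. Your remark that all sums are then finite, and your reading off $B_0$ as $\inf\operatorname{supp}\nu$, make that case cleaner than in the paper.

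The Mellin step you substitute for $B_0=0$ fails as written, for two reasons.

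\emph{The convergence condition is backwards.} Near $b=0$, $\int_0 b^{1-z}\,d\mu(b)$ converges only for $\operatorname{Re}z$ bounded \emph{above}. If $B\sim cr^{2m}$ then $\mu(b)\sim(b/c)^{1/m}$, and you need $\operatorname{Re}z<1+1/m$. Meanwhile $\sum_k(2k+1)^{-z}$ needs $\operatorname{Re}z>1$. So you need a nonempty strip, not large $\operatorname{Re}z$.

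\emph{At infinity the identity can be vacuous.} For $B=r^2$ you get $\mu(b)=b$, and $\int_0^\infty b^{1-z}\,db$ diverges for every $z$. Truncating does not fix this. The dilates under $b\mapsto(2k+1)b$ of $b\,d\mu|_{(0,T)}$ spill into $[T,\infty)$, where they differ from $\nu$. So the Mellin transform of the known measure $\nu|_{(0,T)}$ is not $(1-2^{-z})\zeta(z)$ times that of $b\,d\mu|_{(0,T)}$, and you would need an extra Paley--Wiener type argument before dividing. Localizing $f$ away from $0$ does not help either: $\int f\,d\nu=\sum_k\int f((2k+1)b)\,b\,d\mu(b)$ probes $b\to0$ as $k\to\infty$.

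The paper avoids both problems by staying with the distribution function. The inversion $G(t)=\sum_{m\ \mathrm{odd}}\mu(m)N(t/m)$ only uses $N$ on $(0,t]$, so nothing happens at infinity. For $B_0=0$ the infinite sums are justified by absolute convergence of $\sum_{m,n\ \mathrm{odd}}|G(t/(mn))|=\sum_{k\ \mathrm{odd}}d(k)\,|G(t/k)|$, where $d(k)$ is the number of divisors; this justifies exchanging the order of summation. The paper gets this from $|R(x)|\le Cx^2$ on $[0,1]$.

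In fact any bound $G(x)=O(x^{1+\epsilon})$ suffices. Since $G(x)\le x\,\mu(x)$, such a bound follows from $B(r)\ge cr^{2m}$ near $0$. Note that when $B(0)=0$ the paper's $x^2$ bound actually requires $B\gtrsim r^2$.

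If $B$ vanishes to infinite order at $0$ (e.g.\ $B\approx e^{-1/r^2}$, giving $G(x)\approx x/\log^2(1/x)$), neither this absolute convergence nor any Mellin strip is available. That sub-case needs an additional argument in either approach.
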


In the above result, the potential $V=0$ is zero. It had been established
in \cite[Thm. 1.3]{Guillemin-Wang-2012} in the presence of a radial
confining potential $V$. Hence our result extends their work therein.

The article is structured as follows. In \prettyref{sec:The-Semiclassical-Trace}
we begin with background on the semiclassical trace formulas used
in the proofs of \prettyref{thm: Theorem 1} and \prettyref{thm: Theorem 3}.
In \prettyref{sec:Spectral-Determination-of semiclassical potential}
we give a proof of \prettyref{thm: Theorem 1} based on the semiclassical
trace expansion for the Schrödinger operator. In \prettyref{sec:Spectral-Determination-of radial magnetic field}
we give a proof of \prettyref{thm: Theorem 3} using the rescaled
trace formula for the magnetic Laplacian. In the final section \prettyref{sec:Spectral-Determination-of via heat kernel}
we give an alternate proof of \prettyref{thm: Theorem 3} via the
heat trace, under some additional assumptions.

\section{The Semiclassical Trace Formulas}\label{sec:The-Semiclassical-Trace}

In this section we give an outline of the derivation of the trace
formulas used in our proofs.

\subsection{Semiclassical trace expansion }

First we consider the trace formula for the Schrödinger operator \prettyref{eq:Schrodinger operator}.
In fact what follows works for any elliptic semiclassical pseudodifferential
operator. Our exposition closely follows the one of Guillemin and
Wang.

A semiclassical differential operator of order $r$ on $\mathbb{R}^{n}$
is an operator on $C^{\infty}(\mathbb{R}^{n})$ of the form 
\begin{equation}
A=\sum_{|\alpha|\leq r}a_{\alpha}(x,h)(hD_{x})^{\alpha},
\end{equation}
where $a_{\alpha}(x,h)$ are smooth functions on $\mathbb{R}^{n}\times\mathbb{R}$
and $hD^{\alpha}_{x}=(\frac{h}{i})^{|\alpha|}\partial^{\alpha}_{x}$,
for $\alpha=(\alpha_{1},...,\alpha_{n})$ a multi-index, with $|\alpha|=\alpha_{1}+\dots+\alpha_{n}.$
We let $P_{h}$ be such an operator on $\mathbb{R}^{n}$. We consider
two phase-space representations of $P_{h}$, namely its Kohn-Nirenberg
symbol 
\begin{equation}
p(x,\xi,h)=\sum_{\alpha}a_{\alpha}(x,h)\xi^{\alpha}
\end{equation}
and Weyl symbol 
\begin{equation}
p^{w}(x,\xi,h)=\exp(-\frac{h}{2}D_{\xi}\partial_{x})p(x,\xi,h).
\end{equation}
We assume that $p^{w}$ is real valued, so that $P_{h}$ is self-adjoint
\cite{Zworski}. We also assume that $(p^{w})^{-1}([a,b])$ is compact
for some interval $[a,b]\subset\mathbb{R}$, $0\leq h\leq h_{0}$.
We then have, by Friedrich's theorem, that the spectrum of $P_{h}$,
for $h<h_{0}$, on $[a,b]$, consists of a finite number of eigenvalues,
\begin{equation}
\lambda_{i}(h),\;\;\;1\leq i\leq N(h),
\end{equation}
where $N(h)\to\infty$ as $h\to0$. Let 
\begin{equation}
p(x,\xi,0)=p^{w}(x,\xi,0)
\end{equation}
be the principal symbol of $P_{h}$, with asymptotic expansion $p=\sum p_{j}h^{j}$.
Suppose $f\in C^{\infty}_{0}(\mathbb{R})$ is smooth and compactly
supported on $(a,b)$. Then 
\begin{equation}
f(P_{h})=\frac{1}{\sqrt{2\pi}}\int\hat{f}(t)e^{itP_{h}}\,dt,
\end{equation}
where $\hat{f}$ is the Fourier transform of $f$. Hence, understanding
$f(P_{h})$ reduces to understanding the operator $U(t)=e^{itP_{h}}$.
Formally, we have that $U(t)$ satisfies 
\begin{equation}
\frac{1}{i}\frac{\partial}{\partial t}U(t)=P_{h}U(t),\quad U(0)=\operatorname{Id}.
\end{equation}
We look for $U(t)$ as a pseudodifferential operator. Let $K_{U(t)}(x,y,t,h)$
be the Schwartz kernel of $U(t)$. From \cite{Zworski}, for this
to be the kernel of a pseudodifferential operator, it must have the
form 
\begin{equation}
K_{U(t)}(x,y,t,h)=(2\pi h)^{-n}\int a(x,\xi,t,h)e^{i\frac{(x-y)\xi}{h}}\,d\xi,
\end{equation}
with initial condition $a(x,\xi,0,h)=1$. We then use the ansatz 
\begin{equation}
a(x,\xi,t,h)=e^{itp_{0}(x,\xi)}b(x,\xi,t,h).
\end{equation}
The initial condition becomes $b(x,\xi,0,h)=1$. Substituting this
ansatz into (2.7) gives us 
\begin{equation}
\frac{1}{i}\frac{\partial}{\partial t}\left(e^{itp_{0}(x,\xi)}b(x,\xi,t,h)\right)=p(x,\xi,h)\,\star\,\left(e^{itp_{0}(x,\xi)}b(x,\xi,t,h)\right),
\end{equation}
where $\star$ denotes the composition of pseudodifferential operators.
Expanding this out gives us 
\begin{equation}
e^{itp_{0}}\left(\frac{1}{i}\frac{\partial b}{\partial t}+p_{0}b\right)\sim\sum_{\alpha}\frac{h^{\alpha}}{\alpha!}D^{\alpha}_{\xi}p\partial^{\alpha}_{x}(e^{itp_{0}}b),
\end{equation}
and cancelling the $e^{itp_{0}}$ factor on both sides, we get 
\begin{equation}
\frac{1}{i}\frac{\partial b}{\partial t}\sim\sum_{|\alpha|\geq1}\frac{h^{\alpha}}{\alpha!}D^{\alpha}_{\xi}pQ^{\alpha}b+(p-p_{0})b,
\end{equation}
where $Q^{\alpha}$ is the operator 
\begin{equation}
Q^{\alpha}=\frac{1}{\alpha!}\left(\partial_{x}+it\frac{\partial p_{0}}{\partial x}\right)^{\alpha}.
\end{equation}
Now, expanding $b$ and $p$ in powers of $h$, 
\begin{equation}
b\sim\sum_{k}b_{k}(x,\xi,t)h^{k},\quad p\sim\sum_{k}p_{k}h^{k},
\end{equation}
and matching powers of $h$ gives us 
\begin{equation}
\frac{1}{i}\frac{\partial b_{m}}{\partial t}=\sum_{|\alpha|\geq1}\sum_{j+k+|\alpha|=m}D^{\alpha}_{\xi}p_{j}Q^{\alpha}b_{k}+\sum_{j\geq1}p_{j}b_{m-j},
\end{equation}
with $b_{0}(x,\xi,t)=1$ and $b_{m}(x,\xi,0)=0$ for $m\geq1$. Each
$b_{m}$ is then computed recursively using integration. It can be
shown inductively that each $b_{m}$ is a polynomial in $t$ of degree
$2m$, 
\begin{equation}
b_{m}(x,\xi,t)=\sum_{l\leq2m}b_{m,l}(x,\xi)t^{l}.
\end{equation}

From \cite{Zworski}, we have that the trace of a pseudodifferential
operator $A$ on $\mathbb{R}^{n}$ with Kohn-Nirenberg symbol $a(x,\xi,h)$
is obtained by integrating the Schwartz kernel $K_{A}(x,y)$ along
the diagonal $x=y$: 
\begin{equation}
\operatorname{Tr}(A)=\int K_{A}(x,x)\,dx
\end{equation}
so it follows from the Fourier representation of $f(P_{h})$ that,
up to constants 
\begin{align*}
\operatorname{Tr}(f(P_{h})) & =\int\hat{f}(t)\,K_{U(t)}(x,x,t,h)\,dx\,dt\\
 & =h^{-n}\int\hat{f}(t)e^{itp_{0}}b(x,\xi,t,h)\,dx\,d\xi\,dt\,+O(h^{\infty})\\
 & =h^{-n}\int\hat{f}(t)e^{itp_{0}}\sum_{k}b_{k}(x,\xi,t)h^{k}\,dx\,d\xi\,dt\,+O(h^{\infty})\\
 & =h^{-n}\int\hat{f}(t)e^{itp_{0}}\sum_{k}\sum_{l\leq2k}b_{k,l}(x,\xi)t^{l}h^{k}\,dx\,d\xi\,dt\,+O(h^{\infty}).
\end{align*}
Recall that we have the Fourier transform identity 
\begin{equation}
\widehat{g^{\prime}}(\xi)=i\xi\hat{g}(\xi),
\end{equation}
so each $t^{l}e^{itp_{0}}$, when integrated against $\hat{f}(t)$,
gives 
\begin{equation}
\int\hat{f}(t)t^{l}e^{itp_{0}}\,dt=\left(\frac{1}{i}\frac{d}{ds}\right)^{l}f(s)|_{s=p_{0}}.
\end{equation}
Finally, from this, we get 
\begin{align*}
\operatorname{Tr}(f(P_{h})) & =h^{-n}\int\hat{f}(t)e^{itp_{0}}\sum_{k}\sum_{l\leq2k}b_{k,l}(x,\xi)t^{l}h^{k}\,dx\,d\xi\,dt\,+O(h^{\infty})\\
 & =h^{-n}\sum_{k}h^{k}\int\sum_{l\leq2k}b_{k,l}(x,\xi)\left(\frac{1}{i}\frac{d}{ds}\right)^{l}f(s)|_{s=p_{0}}\,dx\,d\xi\\
 & =h^{-n}\sum_{k}\nu_{k}(f)h^{k},
\end{align*}
where the $k^{th}$ spectral invariant, $\nu_{k}(f)$, is given by
\begin{equation}
\nu_{k}(f)=\int\sum_{l\leq2k}b_{k,l}(x,\xi)\left(\frac{1}{i}\frac{d}{ds}\right)^{l}f(s)|_{s=p_{0}}\,dx\,d\xi.
\end{equation}

\subsection{Rescaled heat trace expansion}

We now recall the rescaled heat trace expansion. The results of this
section are best suited for the purely magnetic Schrödinger given
by
\begin{equation}
\hat{S}_{h}=\sum^{n}_{j=1}\left(ih\frac{\partial}{\partial x_{j}}+a_{j}\right)^{2}.
\end{equation}
The magnetic field $B$ is the differential of the magnetic potential
\begin{equation}
a=\sum^{n}_{j=1}a_{j}\,dx_{j};\quad\ B=da=\sum_{j<l}(\frac{\partial}{\partial x_{l}}a_{j}-\frac{\partial}{\partial x_{j}}a_{l})\,dx_{j}\wedge dx_{l}.
\end{equation}
It has the usual norm $|B|=\sqrt{\sum_{i,j}|B_{i,j}|^{2}}$.

We begin by recalling from \cite[Thm. 8]{Marinescu-Savale18} , \cite[Thm. 1.1]{Ma-Marinescu-Zelditch-2015}
the heat trace expansion for a purely magnetic Schrödinger operator:
if $|B|\to\infty$ as $|x|\to\infty$ then $\hat{S}_{h}$ admits the
heat kernel expansion 
\begin{equation}
e^{-\frac{t\hat{S}_{h}}{h}}\left(x,x\right)\sim h^{-n/2}[c_{0}(x;t)+c_{1}(x;t)h+c_{2}(x;t)h^{2}+\dots].
\end{equation}

Since our later arguments rely only on the leading coefficient $c_{0}(x;t)$,
we include a brief sketch of its derivation: \\
 In the 2-dimensional case, one has that $e^{\frac{-t\hat{S}_{h}}{h}}u_{0}$
solves 
\begin{equation}
(h\partial_{t}+\hat{S}_{h})u=0;\quad u(x,0)=u_{0},
\end{equation}
and has Schwartz kernel $K^{h}_{t}(x,y)$, satisfying the equations
\begin{equation}
(h\partial_{t}+\hat{S}_{h})K^{h}_{t}(x,y)=0
\end{equation}
and 
\begin{equation}
\lim_{t\rightarrow0}K^{h}_{t}(x,y)=\delta(x-y).
\end{equation}
The trace is then obtained by integrating this kernel over the diagonal:
\begin{equation}
\operatorname{Tr}(e^{\frac{-t\hat{S}_{h}}{h}})=\int K^{h}_{t}(x,x)\,dx.
\end{equation}
To understand $K^{h}_{t}(x,x)$ near a point $x_{0}$, we translate
to the origin so that $x_{0}=0$. Now, locally choosing a an appropriate
gauge, we can write 
\begin{equation}
a(x)=\frac{1}{2}B(0)(-x_{2},x_{1})+\mathcal{O}(|x|^{2}),
\end{equation}
and hence 
\begin{align*}
\hat{S}_{h} & =\sum^{n}_{j=1}\left(ih\frac{\partial}{\partial x_{j}}+a_{j}\right)^{2}\\
 & =(ih\nabla_{x}+\frac{1}{2}B(0)(-x_{2},x_{1})+\mathcal{O}(|x|^{2}))^{2}.
\end{align*}
We can then rescale $x\to\sqrt{h}x$, with $\partial_{x}\to\frac{1}{\sqrt{h}}\partial_{x}$,
obtaining 
\begin{align*}
\hat{S}_{h} & =(i\sqrt{h}\nabla_{x}+\frac{1}{2}B(0)(-\sqrt{h}x_{2},\sqrt{h}x_{1})+\mathcal{O}(h))^{2}\\
 & =h(i\nabla_{x}+\frac{1}{2}B(0)(-x_{2},x_{1}))^{2}+\mathcal{O}(h^{3/2}),
\end{align*}
and dividing by $h$ yields 
\begin{equation}
\frac{\hat{S}_{h}}{h}=(i\nabla_{x}+\frac{1}{2}B(0)(-x_{2},x_{1}))^{2}+\mathcal{O}(\sqrt{h}).
\end{equation}
Thus, to leading order near each point $x_{0}$, the heat kernel is
approximated by that of the constant magnetic field model $P_{0}:=(i\nabla_{x}+\frac{1}{2}B(0)(-x_{2},x_{1}))^{2},$
whose heat kernel on the diagonal is given by Mehler's formula as
\begin{equation}
\frac{1}{4\pi t}\frac{tB(0)}{\operatorname{sinh}(tB(0))}=\frac{B(0)}{4\pi\operatorname{sinh}(tB(0))}.
\end{equation}
Integrating local contributions over $x_{0}\in\mathbb{R}^{2}$ yields
\begin{equation}
c_{0}(x;t)=\frac{B(x)}{e^{tB(x)}-e^{-tB(x)}},
\end{equation}
in the $2$-dimensional case.

The factor of $h^{-1}$ in the expansion arises from the rescaling
$x\to x_{0}+\sqrt{h}x$, since preserving the initial condition $\lim\limits_{t\rightarrow0}K^{h}_{t}(x,y)=\delta(x-y),$
requires the rescaled kernel to satisfy $\delta(\sqrt{h}(x-y))=h^{-n/2}\delta(x-y)$
since $\delta(cz)=|c|^{-n}\delta(z)$ in $n$ dimensions.

\bigbreak

A similar proof exists for the $3$-dimensional case, yielding 
\begin{equation}
c_{0}(x;t)=\frac{1}{\sqrt{t}}\frac{B(x)}{e^{tB(x)}-e^{-tB(x)}}.
\end{equation}

This heat kernel expansion can in fact be generalised to the trace
of arbitrary smooth and compactly supported functions $f\in C^{\infty}_{c}\left(\mathbb{R}\right)$
of $\hat{S}_{h}$. Namely, we have 
\begin{equation}
f(\frac{\hat{S}_{h}}{h})\left(x,x\right)\sim h^{-n/2}[c^{f}_{0}\left(x\right)+c^{f}_{1}\left(x\right)h+c^{f}_{2}\left(x\right)h^{2}+\dots],
\end{equation}
where 
\begin{equation}
c^{f}_{0}\left(x\right)=\tilde{f}(B)=\begin{cases}
B[f(B)+f(3B)+f(5B)+\dots] & \text{if }B>0\\
\int^{\infty}_{0}f(s)\,ds & \text{if }B=0
\end{cases}
\end{equation}
with $B=B\left(x\right)$. The proof of the above is an elementary
calculation via Laplace transforms (cf. \cite[Prop. 7.4]{Savale2017-Koszul}). 

The above combined with the discreteness of the spectrum of $\hat{S}_{h}$
now gives the heat trace expansion formulas \prettyref{eq:rescaled heat kernel exp.},
\prettyref{eq:resc. heat kernel 1t 2D}, \prettyref{eq:resc. heat kernel 1t 3D}
and \prettyref{eq:1t component}.

\section{\medspace{}Spectral Determination of a Semiclassical Potential
in one dimension}\label{sec:Spectral-Determination-of semiclassical potential}

Let $P_{h}=S_{h}$ be a perturbed Schrödinger operator in one dimension,
of the form $S_{h}=-h^{2}\Delta+V_{h}(x)$ with $V_{h}\sim V_{0}+hV_{1}+h^{2}V_{2}+\dots.$
We wish to show that each $V_{j}$ of the expansion of $V_{h}$ is
spectrally determined. The key observation will be that the $m^{th}$
spectral invariant depends linearly on $V_{m}$, modulo terms involving
only $V_{0},\dots,V_{m-1}$.

From the previous section, we have the semiclassical trace expansion
\begin{equation}
\operatorname{Tr}(f(S_{h}))\sim h^{-1}\left(\sum^{\infty}_{m=0}\nu_{m}(f)h^{m}\right),
\end{equation}
where 
\begin{equation}
\nu_{m}(f)=\int e^{itp_{0}}\hat{f}(t)b_{m}(x,\xi,t)\,dt\,dx\,d\xi,
\end{equation}
and 
\begin{equation}
p_{0}=\xi^{2}+V_{0}
\end{equation}
is the constant term of the asymptotic expansion of the principal
symbol $p(x,\xi,h)$ of $S_{h}$. We also have from (2.15) that $b_{m}$
satisfies the recursion relation 
\begin{equation}
\frac{\partial b_{m}}{\partial t}=2\xi\left(\frac{\partial}{\partial x}+itV^{\prime}_{0}\right)b_{m-1}+\left(\frac{\partial}{\partial x}+itV^{\prime}_{0}\right)^{2}b_{m-2}+\sum^{m}_{j=1}V_{j}b_{m-j},
\end{equation}
with $b_{0}(x,\xi,t)=1$ and $b_{m}(x,\xi,t)=0$ for $m<0$.\\
 \hspace*{0.5cm} Integrating this expression gives $b_{m}$ in the
form 
\begin{equation}
b_{m}(V_{0},\dots,V_{m})=F(V_{0},\dots,V_{m-1})+tV_{m}
\end{equation}
for some function $F$, and so from (2.20) it follows by induction
that we have $\nu_{m}=\nu_{m}(V_{0},\dots,V_{m})$ as a function of
the first $m+1$ $V_{j}$'s. Hence we have 
\begin{equation}
\nu_{m}=G(V_{0},\dots,V_{m-1})+\int e^{itp_{0}}\hat{f}(t)tV_{m}\,dt\,dx\,d\xi
\end{equation}
for some function $G$.
\begin{proof}[Proof of \prettyref{thm: Theorem 1}]
 We now show inductively that the spectral invariants $\nu_{0},\nu_{1},\dots,\nu_{m}$
determine $V_{0},V_{1},\dots,V_{m}$ for $V_{h}$ as in the theorem
statement. For our base case we compute $\nu_{0}(f)$ using (2.20):
\begin{align*}
\nu_{0}(f) & =\int e^{itp_{0}}\hat{f}(t)\,dt\,dx\,d\xi\\
 & =\int\left(\int e^{itp_{0}}\hat{f}(t)\,dt\right)\,dx\,d\xi\\
 & =\int f(p_{0})\,dx\,d\xi.
\end{align*}
Take $f=\chi_{[0,\lambda]}$ the characteristic function of $[0,\lambda]$.
\begin{rem} For $f=\chi_{[0,\lambda]}$, we smoothly approximate
$f$ by $f_{\epsilon}\in C^{\infty}_{0}(\mathbb{R})$ with $f_{\epsilon}\rightarrow f$
pointwise, and pass to the limit via monotone convergence. We use
$f=\chi_{[0,\lambda]}$ with this understanding. \end{rem}Thus we
obtain 
\begin{align*}
\nu_{0}(f) & =\int_{\xi^{2}+V_{0}(x)\leq\lambda}dx\,d\xi\\
 & =\int_{V_{0}(x)\leq\lambda}\int_{-\sqrt{\lambda-V_{0}(x)}\leq\xi\leq\sqrt{\lambda-V_{0}(x)}}d\xi\,dx\\
 & =2\int_{V_{0}(x)\leq\lambda}\sqrt{\lambda-V_{0}(x)}\,dx.
\end{align*}
Let $x=g_{1}(s)$ be the inverse function of $s=V_{0}(x)$, $x\in(0,a)$
and $x=g_{2}(s)$ be the inverse function of $s=V_{0}(-x)$, $x>0$.
Then since $V_{0}$ is a single well potential, we get 
\begin{align*}
\nu_{0}(f) & =2\int^{\lambda}_{0}\sqrt{\lambda-s}g^{\prime}_{1}(s)\,ds+2\int^{\lambda}_{0}\sqrt{\lambda-s}g^{\prime}_{2}(s)\,ds\\
 & =2\int^{\lambda}_{0}\sqrt{\lambda-s}(g^{\prime}_{1}(s)+g^{\prime}_{2}(s))\,ds
\end{align*}
We recall the fractional integration operation of Abel, 
\begin{equation}
J^{a}h(\lambda)=\frac{1}{\Gamma(a)}\int^{\lambda}_{0}(\lambda-t)^{a-1}h(t)\,dt
\end{equation}
for $a>0$, which satisfies $J^{a}J^{b}=J^{a+b}$, and note that 
\begin{equation}
2\int^{\lambda}_{0}\sqrt{\lambda-s}(g^{\prime}_{1}(s)+g^{\prime}_{2}(s))\,ds=\sqrt{\pi}J^{\frac{3}{2}}(g^{\prime}_{1}(s)+g^{\prime}_{2}(s)).
\end{equation}
Hence by applying $J^{\frac{1}{2}}$ and differentiating by $\lambda$
twice, one recovers $g^{\prime}_{1}(s)+g^{\prime}_{2}(s)$. Using
the evenness of $V_{0}$, we have $g_{1}(s)=g_{2}(s)\eqqcolon g(s)$
and hence $g^{\prime}_{1}(s)=g^{\prime}_{2}(s)=g^{\prime}(s)$, and
so integrating with $g(0)=0$ recovers $g$ and hence $V_{0}$.\\
 \hspace*{0.5cm}Now assume that $\nu_{0},\dots,\nu_{k}$ determine
$V_{0},\dots,V_{k}$ for $0\leq k\leq m-1$. Under this hypothesis,
it follows from (3.6) that 
\[
\int e^{itp_{0}}\hat{f}(t)tV_{m}\,dt\,dx\,d\xi=\nu_{m}-G(V_{0},\dots,V_{m-1})
\]
is determined by $\nu_{0},\dots,\nu_{m-1},\nu_{m}$, so to show that
$V_{m}$ is determined by $\nu_{0},\dots,\nu_{m-1},\nu_{m}$ it suffices
to recover $V_{m}$ from this expression. Taking $f^{\prime}=\chi_{[0,\lambda]}$
we get 
\begin{align*}
\int e^{itp_{0}}\hat{f}(t)tV_{m}\,dt\,dx\,d\xi & =\int f^{\prime}(p_{0})V_{m}\,dx\,d\xi\\
 & =\int_{\xi^{2}+V_{0}(x)\leq\lambda}V_{m}\,dx\,d\xi,
\end{align*}
and applying a similar argument as before we have 
\begin{align*}
\int_{\xi^{2}+V_{0}(x)\leq\lambda}V_{m}\,dx\,d\xi & =\int_{V_{0}(x)\leq\lambda}\int_{-\sqrt{\lambda-V_{0}(x)}\leq\xi\leq\sqrt{\lambda-V_{0}(x)}}V_{m}(x)\,d\xi\,dx\\
 & =2\int_{V_{0}(x)\leq\lambda}\sqrt{\lambda-V_{0}(x)}V_{m}(x)\,dx\\
 & =2\int^{\lambda}_{0}\sqrt{\lambda-s}V_{m}(g_{1}(s))g^{\prime}_{1}(s)\,ds+2\int^{\lambda}_{0}\sqrt{\lambda-s}V_{m}(-g_{2}(s))g^{\prime}_{2}(s)\,ds\\
 & =2\int^{\lambda}_{0}\sqrt{\lambda-s}(V_{m}(g_{1}(s))g^{\prime}_{1}(s)+V_{m}(-g_{2}(s))g^{\prime}_{2}(s))\,ds\\
 & =4\int^{\lambda}_{0}\sqrt{\lambda-s}V_{m}(g(s))g^{\prime}(s)\,ds,
\end{align*}
with $g_{1},g_{2},$ and $g$ as before. By similarly applying $J^{\frac{1}{2}}$
and differentiating by $\lambda$ twice, one recovers $V_{m}(g(s))g^{\prime}(s)$.
We have that $V_{0}$, and hence $g$ and $g^{\prime}$, is determined
by $\nu_{0}$, so from this we can recover $V_{m}(g(s))=V_{m}(x)$.
\end{proof}

\section{ Spectral Determination of a Radially Symmetric Magnetic Field in
2 Dimensions}\label{sec:Spectral-Determination-of radial magnetic field}

In this section we prove \prettyref{thm: Theorem 3}. The proof shall
use a rescaled trace expansion \prettyref{eq:rescaled heat kernel exp.}
of the Schrödinger operator $\hat{S}_{h}$ corresponding to an arbitrary
compactly supported function $f\in C^{\infty}_{c}\left(\mathbb{R}\right)$.

We assume that $B(r)$ is monotone increasing. This means that, in
the case of $B(0)>0$, we have $B>0$ and, in the case of $B(0)=0$,
we have $B=0$ at $r=0$ only. As a result, the $B=0$ case of $\tilde{f}$
can essentially be ignored.
\begin{proof}[Proof of \prettyref{thm: Theorem 3}]
 Take $f$ to be an arbitrary smooth, compactly supported, non-negative
function of $\hat{S}_{h}$. Let $r(B)$ be the inverse function of
$B(r)$. We can then rewrite the $c_{0}$ coefficient as 
\begin{align*}
c^{f}_{0} & =\frac{1}{2}\int^{\infty}_{0}[r^{2}]^{\prime}\tilde{f}(B)\,dB\\
 & =\frac{1}{2}\int^{\infty}_{0}[r^{2}]^{\prime}B[f(B)+f(3B)+f(5B)+\dots]\,dB\\
 & =\frac{1}{2}\int^{\infty}_{0}\sum^{\infty}_{n=0}[r^{2}]^{\prime}Bf((2n+1)B)\,dB\\
 & =\frac{1}{2}\sum^{\infty}_{n=0}\int^{\infty}_{0}B[r^{2}]^{\prime}f((2n+1)B)\,dB,
\end{align*}
where the final equality follows from Fubini's Theorem since all the
terms are non-negative. Now taking $f=\chi_{[0,\lambda]}$ (recalling
remark 3.1), we get 
\begin{equation}
c_{0}(\lambda)=\sum^{\infty}_{n=0}\frac{1}{2}\int^{\lambda/{(2n+1)}}_{0}B[r^{2}]^{\prime}\,dB.
\end{equation}
Let 
\begin{equation}
R(x)=\frac{1}{2}\int^{x}_{0}B[r^{2}]^{\prime}\,dB.
\end{equation}
Then we have 
\begin{equation}
c_{0}(\lambda)=\sum^{\infty}_{n=0}R\left(\frac{\lambda}{2n+1}\right).
\end{equation}
We note that this series converges since we have $|R(x)|\leq Cx^{2}$,
for $x\in[0,1]$. 

Furthermore, by differentiating with respect to $x$, integrating
with $r(0)=0$, and observing that $r(B)>0$, for all $B>0$, we see
that $R(x)$ uniquely determines $r(B)$. This reduces our problem
to that of showing $R(x)$ is uniquely determined by 
\begin{equation}
\tilde{R}(\lambda)=\sum^{\infty}_{n=0}R\left(\frac{\lambda}{2n+1}\right)=R(\lambda)+R(\lambda/3)+R(\lambda/5)+\dots,
\end{equation}
which we now prove: We have that 
\begin{align*}
\tilde{R}(\lambda) & =\sum^{\infty}_{n=0}R\left(\frac{\lambda}{2n+1}\right)\\
 & =\sum^{\infty}_{n=1}\beta(n)R\left(\frac{\lambda}{n}\right),
\end{align*}
where 
\begin{equation}
\beta(n)=\begin{cases}
0 & \text{if }n\text{ even}\\
1 & \text{if }n\text{ odd}
\end{cases}.
\end{equation}
We recall the generalized Möbius inversion formula \cite[pg. 40]{Apostol1976}:
for $\alpha$ a completely multiplicative function we have 
\begin{equation}
G(x)=\sum_{n\leq x}\alpha(n)F\left(\frac{x}{n}\right)\quad\text{if, and only if, }F(x)=\sum_{n\leq x}\alpha^{-1}(n)G\left(\frac{x}{n}\right),
\end{equation}
where $\alpha^{-1}$ is the Dirichlet inverse of $\alpha$. We calculate
the Dirichlet inverse of $\beta$ and show that, under our assumptions,
we can generalise the above formula to $R$ in terms of $\tilde{R}$.
Note that $\beta$ is a multiplicative function so we have that its
Dirichlet inverse $\beta^{-1}$ must also be multiplicative. Hence,
to determine $\beta^{-1}(n)$ for all $n$, we can compute it for
primes $p$. 

From \cite{Apostol1976}, for $p=2$, we have 
\begin{equation}
(\beta\ast\beta^{-1})(2)=\beta(1)\beta^{-1}(2)+\beta(2)\beta^{-1}(1)=I(2)=0,
\end{equation}
where 
\begin{equation}
I(n)=\begin{cases}
1 & \text{if }n=1\\
0 & \text{if }n>1
\end{cases}
\end{equation}
the identity function, giving us $\beta^{-1}(2)=0$. For $p\neq2$,
we have 
\begin{align*}
(\beta\ast\beta^{-1})(p)=\beta(1)\beta^{-1}(p)+\beta(p)\beta^{-1}(1)=I(p)=0,
\end{align*}
giving us $\beta^{-1}(p)=-\beta^{-1}(1)=-1$. It follows from the
multiplicative property of $\beta^{-1}$ that 
\begin{equation}
\beta^{-1}(n)=\begin{cases}
0 & \text{if }n\text{ even}\\
\mu(n) & \text{if }n\text{ odd}
\end{cases},
\end{equation}
where 
\begin{equation}
\mu(n)=\begin{cases}
1 & \text{if }n=1\\
(-1)^{k} & \text{if }n\text{ is the product of }k\text{ distinct primes}\\
0 & \text{if }n\text{ is divisible by a square }>1
\end{cases},
\end{equation}
is the Möbius function. 

We now make use of the identity 
\begin{equation}
\sum_{d|n}\alpha(d)\alpha^{-1}\left(\frac{n}{d}\right)=I(n),
\end{equation}
where $\alpha$ is an arithmetical function with $\alpha(1)\neq0$
with Dirichlet inverse $\alpha^{-1}$ \cite{Apostol1976}: 
\begin{align*}
R(\lambda) & =\sum^{\infty}_{k=1}I(k)R\left(\frac{\lambda}{k}\right)\\
 & =\sum^{\infty}_{k=1}\left(\sum_{d|k}\beta^{-1}(d)\beta\left(\frac{k}{d}\right)\right)R\left(\frac{\lambda}{k}\right),
\end{align*}
and substituting $k=nm$ gives us 
\begin{equation}
\sum^{\infty}_{m=1}\sum^{\infty}_{n=1}\beta^{-1}(n)\beta(m)R\left(\frac{\lambda}{nm}\right).\label{eq:beta sum}
\end{equation}
We show that these sums can in fact be reordered. From $|R(\lambda)|\leq C\lambda^{2}$
on $[0,1]$, we have by the smoothness of $R$ that $\lambda\in[0,k]$
implies $|R(\lambda)|\leq C_{k}\lambda^{2}$, with $C_{k}>0$, for
all $k\in\mathbb{N}$. Hence, on any interval $[0,k]$, $k\in\mathbb{N}$,
we have 
\begin{equation}
|\beta^{-1}(n)\beta(m)R\left(\frac{\lambda}{nm}\right)|\leq|R\left(\frac{\lambda}{nm}\right)|\leq\frac{C_{k}\lambda^{2}}{(nm)^{2}},
\end{equation}
and 
\begin{equation}
\sum^{\infty}_{m=1}\sum^{\infty}_{n=1}\frac{C_{k}\lambda^{2}}{(nm)^{2}}=\frac{C_{k}\lambda^{2}\pi^{4}}{36}.
\end{equation}
It follows by the Weierstrass $M$-Test that the sum \prettyref{eq:beta sum}
is absolutely convergent on any interval $[0,k]$, $k\in\mathbb{N}$.
Hence, on $[0,\infty)$ we have 
\begin{align*}
R(\lambda) & =\sum^{\infty}_{m=1}\sum^{\infty}_{n=1}\beta^{-1}(n)\beta(m)R\left(\frac{\lambda}{nm}\right)\\
 & =\sum^{\infty}_{n=1}\sum^{\infty}_{m=1}\beta^{-1}(n)\beta(m)R\left(\frac{\lambda}{nm}\right)\\
 & =\sum^{\infty}_{n=1}\beta^{-1}(n)\left(\sum^{\infty}_{m=1}\beta(m)R\left(\frac{\lambda}{nm}\right)\right)\\
 & =\sum^{\infty}_{n=1}\beta^{-1}(n)\tilde{R}\left(\frac{\lambda}{n}\right),
\end{align*}
showing that $\tilde{R}$ uniquely determines $R$ on $[0,\infty)$.
\end{proof}

\section{ Spectral Determination of a Radially Symmetric Magnetic Field via
the heat kernel}\label{sec:Spectral-Determination-of via heat kernel}

The proof of \prettyref{thm: Theorem 3} in the previous section used
a rescaled trace expansion \prettyref{eq:rescaled heat kernel exp.}
of the Schrödinger operator $\hat{S}_{h}$ corresponding to an arbitrary
compactly supported function $f\in C^{\infty}_{c}\left(\mathbb{R}\right)$. 

One may ask if the same is provable using the heat trace expansion.
However, the heat operator of the magnetic Laplacian $e^{-t\hat{S}_{h}}$
is not always trace class; even for $B$ a potential well. This requires
additional assumptions on the magnetic field $B$. The hypotheses
are:

\begin{align}
\textnormal{\textbf{H1:}}\qquad & \exists C>0\quad\textnormal{such that }\left|B\left(x\right)\right|\leq C\left|B\left(x'\right)\right|,\,\forall\left\Vert x-x'\right\Vert \leq1\label{eq:Hyp 1}\\
\textnormal{\textbf{H2:}}\qquad & \max_{\left|\alpha\right|=2}\left(\sup_{\left|x-x'\right|\leq1}\left\Vert \partial^{\alpha}a\right\Vert \right)=O\left(\left|B\left(x\right)\right|^{3/2}\right)\label{eq:Hyp 2}\\
 & \qquad\quad\textnormal{ for a suitable chosen magnetic potential \ensuremath{a}}\nonumber \\
\textnormal{\textbf{H3:}}\qquad & \int_{\mathbb{R}^{2}}e^{-tB\left(x\right)}dx<\infty,\,\forall t>0,\quad\textnormal{with }B=\left|B\left(x\right)\right|.\label{eq:Hyp 3}
\end{align}
It follows from the results of \cite{cdv1986} that the heat operator
of the magnetic Laplacian $e^{-t\hat{S}_{h}}$, $\forall t>0$, is
trace class under the above hypothesis H1, H2 and H3 for $B$ a potential
well.

For the sake of our inverse spectral results we shall assume that
$B=B\left(r\right)$ is radial. Furthermore, we introduce a slight
strengthening of the last hypothesis H3 as follows
\begin{equation}
\textnormal{\textbf{H3':}}\qquad\int^{\infty}_{1}r\,B(r)^{1-\sigma}\,dr<\infty,\quad\textnormal{for some }\sigma>1.\label{eq:Hyp 3'}
\end{equation}
It is clear that H3' implies H3.

Our theorem is as follows.
\begin{thm}
\label{thm:Theorem 4} Suppose $B$ is a radially symmetric magnetic
potential well in 2 dimensions, with $B'(r)>0$, $B(0)>0$ . Assuming
H1 \prettyref{eq:Hyp 1}, H2 \prettyref{eq:Hyp 2} and H3' \prettyref{eq:Hyp 3'},
the heat operator of the magnetic Laplacian $e^{-\frac{t}{h}\hat{S}_{h}}$
is trace class and its trace determines the magnetic field $B\left(r\right)$.
In particular, the magnetic field is determined by the spectrum of
the Schrödinger operator $\hat{S}_{h}$.
\end{thm}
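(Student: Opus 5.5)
Trace class is inherited from the earlier discussion: under H1, H2 and H3 (which follows from H3'), the results of \cite{cdv1986} give that $e^{-s\hat{S}_{h}}$ is trace class for every $s>0$, in particular for $s=t/h$. I then use the heat trace expansion of Section 2. Its leading coefficient, integrated over $\mathbb{R}^{2}$ in polar coordinates, is
\[
c_{0}(t)=2\pi\int_{0}^{\infty}r\,\frac{B(r)}{e^{tB(r)}-e^{-tB(r)}}\,dr
=2\pi\int_{0}^{\infty}r\,B(r)\sum_{n=0}^{\infty}e^{-(2n+1)tB(r)}\,dr ,
\]
so $c_{0}(t)$ is a spectral invariant for every $t>0$. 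Writing $\operatorname{Tr}(e^{-t\hat{S}_{h}/h})$ as a sum over eigenvalues shows that the trace, and hence its asymptotic coefficients, are determined by the spectrum. One gap remains: the expansion in Section 2 was stated for $f\in C^{\infty}_{c}$, not for the non-compactly-supported $e^{-ts}$. I would close it with a tail estimate. Split $e^{-ts}=f_{1}(s)+f_{2}(s)$ with $f_{1}$ compactly supported. The contribution of $f_{2}$ is bounded uniformly in $h$ using the diamagnetic-type lower bound $\hat{S}_{h}\gtrsim h\,B$ that holds under H1--H2 (again \cite{cdv1986}), together with the integrability $\int r\,B^{1-\sigma}dr<\infty$ from H3'. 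This is the reason H3' is assumed rather than H3.

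To invert, change variables to $B$ as in Section 4. Since $B'>0$ and $B(0)>0$, the inverse $r(B)$ is defined on $[B(0),\infty)$. With $\rho(B):=\pi\,[r^{2}]'(B)\,B$ (set $\rho=0$ for $B<B(0)$),
\[
c_{0}(t)=\sum_{n=0}^{\infty}\int_{0}^{\infty}\rho(B)\,e^{-(2n+1)tB}\,dB=\sum_{n=0}^{\infty}L\bigl((2n+1)t\bigr),\qquad L(t):=\mathcal{L}\rho(t).
\]
Exchanging sum and integral is justified by Tonelli, since every term is non-negative. H3' makes $L(t)$ finite with $L(t)=O(e^{-tB(0)})$ as $t\to\infty$, so the series converges absolutely.

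Next recover $L$ from $c_{0}$ by Möbius inversion, exactly as in Section 4:
\[
L(t)=\sum_{n\ \mathrm{odd}}\mu(n)\,c_{0}(nt).
\]
Reordering the double sum is justified by the bound $|L(nmt)|\le C e^{-nmtB(0)}$, which is summable over $n,m$. This is simpler than the polynomial bound used in Section 4, and it is the place where $B(0)>0$ is used. Injectivity of the Laplace transform on $(0,\infty)$ then determines $\rho$. That gives $[r^{2}]'(B)=\rho(B)/(\pi B)$ for $B>B(0)$, and $B(0)$ itself is recovered as the left endpoint of the support of $\rho$. Integrating with $r(B(0))=0$ recovers $r(B)$, and hence $B(r)$.

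The main obstacle is the justification step: showing that the heat trace expansion with leading term $c_{0}(t)$ holds for the non-compactly-supported function $e^{-ts}$, which needs a uniform tail estimate. The algebraic inversion is routine given Section 4.
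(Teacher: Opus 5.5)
Your proof is correct, but at the inversion step it takes a different route from the paper's.

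\textbf{How the paper inverts.} The paper also reaches $c_0(t)=\sum_{k\ge 0}F((2k+1)t)$, with $F$ the Laplace transform of $g(s)=s f(s) f'(s)$ (your $\rho$ up to a constant). It then takes Mellin transforms on the line $\operatorname{Re} z=\sigma$, obtaining $M[c_0](z)=(1-2^{-z})\zeta(z)\Gamma(z)\int_{B(0)}^{\infty}g(s)s^{-z}\,ds$. It divides by the non-vanishing factors and recovers $g$ by Fourier injectivity on that line. H3' is used exactly there: $\int g(s)s^{-\sigma}ds<\infty$ with $\sigma>1$ controls the blow-up of $c_0(t)$ as $t\to 0$ and gives a common line of absolute convergence.

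\textbf{How you invert.} You invert pointwise in $t$ with the odd M\"obius formula $L(t)=\sum_{n\ \mathrm{odd}}\mu(n)c_0(nt)$. The reordering is justified because $L(u)\le e^{tB(0)}L(t)e^{-uB(0)}$ for $u\ge t$ and $\sum_{n,m\ge 1}e^{-nm\tau}=\sum_{k}d(k)e^{-k\tau}<\infty$. You then use uniqueness of the Laplace transform.

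\textbf{What your route buys.} It gives an explicit reconstruction formula. It is literally the argument of Section 4, with an exponential bound in place of the quadratic one. It does not need continuity of the density, since integrating an a.e.-determined $\rho$ suffices. Most usefully, it needs only $L(t)<\infty$ for all $t>0$, i.e. $\int Be^{-tB}\,dx<\infty$, and this already follows from H3. So your remark that H3' is assumed because of the tail estimate misreads the paper. H3' is what the Mellin argument requires, and your route does not need it at all.

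\textbf{The tail estimate.} The paper does not address this point, and you are right that something is needed. But you need $h\operatorname{Tr}f_2(\hat{S}_h/h)$ and $c_0^{f_2}$ to be \emph{small} uniformly in $h$ as the cutoff $\Lambda$ of $f_1$ tends to infinity; merely bounded is not enough. This follows from two facts:
\begin{enumerate}
\item on $[0,\infty)$ you have $f_2(s)\le e^{-\epsilon\Lambda}e^{-(t-\epsilon)s}$;
\item there is the uniform bound $h\operatorname{Tr}e^{-\tau\hat{S}_h/h}\le(2\pi\tau)^{-1}\int_{\mathbb{R}^2}e^{-\tau B/2}\,dx$.
\end{enumerate}
The second bound comes from the two-dimensional identity-based inequality $\hat{S}_h\ge hB$ for $B>0$ (no H1 or H2 needed), together with the diamagnetic inequality and Golden--Thompson. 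Again only H3 enters. This yields $\lim_{h\to 0}h\operatorname{Tr}e^{-t\hat{S}_h/h}=c_0(t)$, which is all you use.
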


The rest of the section outlines the proof of the above theorem.

Since $B\in C^{1}$, $B'(r)>0$, and $B(r)\to\infty$ as $r\to\infty$,
we have that $B$ is a bijection from $[0,\infty)$ to $[B(0),\infty)$,
and so has a $C^{1}$ inverse $f$ with $f(s)=r$, $f'(s)>0$, and
$B(f(s))=s$. Since also $f(s)>0$ and $s\geq B(0)>0$, we have 
\begin{equation}
g(s):=f'(s)\,f(s)\,s>0,
\end{equation}
for all $s>B(0)$. Substituting $s=B(r)$, and so $r=f(s)$ and $dr=f'(s)\,ds$,
we get 
\begin{align*}
\hat{B}(t):=c_{0} & =\int^{\infty}_{B(0)}\frac{s\,f(s)}{e^{ts}-e^{-ts}}\,f'(s)\,ds\\
 & =\int^{\infty}_{B(0)}\frac{g(s)}{e^{ts}-e^{-ts}}\,ds.
\end{align*}

Recall that for $s\geq B(0)>0$ and $t>0$ we have $e^{-2ts}<1$,
so 
\begin{equation}
\frac{1}{e^{ts}-e^{-ts}}=\frac{e^{-ts}}{1-e^{-2ts}}=\sum^{\infty}_{k=0}e^{-(2k+1)ts}.
\end{equation}
Since every term $g(s)\,e^{-(2k+1)ts}$ is non-negative, Tonelli's
theorem justifies the exchange of the sum and integral: 
\begin{equation}
\hat{B}(t)=\sum^{\infty}_{k=0}\int^{\infty}_{B(0)}g(s)\,e^{-(2k+1)ts}\,ds.
\end{equation}
Define the Laplace transform 
\begin{equation}
F(u):=\int^{\infty}_{B(0)}g(s)\,e^{-us}\,ds,
\end{equation}
so that 
\begin{equation}
\hat{B}(t)=\sum^{\infty}_{k=0}F\!\left((2k+1)t\right).
\end{equation}
Note that 
\begin{equation}
\int^{\infty}_{B(0)}g(s)\,s^{-\sigma}\,ds<\infty\label{eq: g condition}
\end{equation}
is exactly the assumption on $B$ after the substitution $s=B(r)$:
\begin{align*}
\int^{\infty}_{B(0)}g(s)\,s^{-\sigma}\,ds & =\int^{\infty}_{0}f'(B(r))rB(r)B(r)^{-\sigma}B'(r)\,dr\\
 & =\int^{\infty}_{0}r\,B(r)^{1-\sigma}\,dr.
\end{align*}
Since $s^{\sigma}e^{-us}$ is bounded on {[}B(0), $\infty$) for every
$u>0$, it follows that F(u) $<\infty$ and is well defined.

Let 
\begin{equation}
M[h](z)=\int^{\infty}_{0}h(t)\,t^{z-1}\,dt
\end{equation}
denote the Mellin transform of $h$ when it exists. We compute 
\begin{equation}
M[\hat{B}](z)=\int^{\infty}_{0}\sum^{\infty}_{k=0}t^{z-1}F((2k+1)t)\,dt,
\end{equation}
(which we will also show to be well-defined), fix $\operatorname{Re}(z)=\sigma$,
and claim the sum and integral may be exchanged. Namely that 
\begin{equation}
M[\hat{B}](z)=\sum^{\infty}_{k=0}\int^{\infty}_{0}t^{z-1}F\!\left((2k+1)t\right)dt.
\end{equation}
We have $|t^{z-1}|=t^{\sigma-1}$ and $F(u)\geq0$ since $g>0$, so
by Tonelli's theorem it suffices to show that 
\begin{equation}
|M[\hat{B}](z)|\leq\int^{\infty}_{0}\sum^{\infty}_{k=0}t^{\sigma-1}F((2k+1)t)\,dt<\infty,
\end{equation}
since absolute convergence will then allow us to apply Fubini's theorem.
Substituting the definition of $F$, we have 
\begin{equation}
\int^{\infty}_{0}\sum^{\infty}_{k=0}t^{\sigma-1}F((2k+1)t)\,dt=\int^{\infty}_{0}\sum^{\infty}_{k=0}t^{\sigma-1}\int^{\infty}_{B(0)}g(s)\,e^{-(2k+1)ts}\,ds\,dt,
\end{equation}
and Tonelli's theorem again allows us to exchange the sum and integrals,
giving 
\begin{equation}
\sum^{\infty}_{k=0}\int^{\infty}_{B(0)}g(s)\int^{\infty}_{0}t^{\sigma-1}e^{-(2k+1)ts}\,dt\,ds.
\end{equation}
Substituting $w=(2k+1)st$, we get 
\begin{align*}
\int^{\infty}_{0}t^{\sigma-1}e^{-(2k+1)st}\,dt & =\big((2k+1)s\big)^{-\sigma}\int^{\infty}_{0}w^{\sigma-1}e^{-w}\,dw\\
 & =\Gamma(\sigma)\,\big((2k+1)s\big)^{-\sigma},
\end{align*}
and hence 
\begin{align*}
\sum^{\infty}_{k=0}\int^{\infty}_{B(0)}g(s)\int^{\infty}_{0}t^{\sigma-1}e^{-(2k+1)ts}\,dt\,ds & =\sum^{\infty}_{k=0}\int^{\infty}_{B(0)}g(s)\,\Gamma(\sigma)\,\big((2k+1)s\big)^{-\sigma}\,ds\\
 & =\Gamma(\sigma)\left(\sum^{\infty}_{k=0}(2k+1)^{-\sigma}\right)\int^{\infty}_{B(0)}g(s)\,s^{-\sigma}\,ds.
\end{align*}
We have that $\Gamma(\sigma)$ is finite since $\sigma>1>0$, that
$\sum^{\infty}_{k=0}(2k+1)^{-\sigma}$ is finite since $\sigma>1$,
and that $\int^{\infty}_{B(0)}g(s)s^{-\sigma}\,ds$ is finite by \prettyref{eq: g condition}.
Hence 
\begin{equation}
\int^{\infty}_{0}\sum^{\infty}_{k=0}t^{\sigma-1}F((2k+1)t)\,dt=\Gamma(\sigma)\left(\sum^{\infty}_{k=0}(2k+1)^{-\sigma}\right)\int^{\infty}_{B(0)}g(s)\,s^{-\sigma}\,ds<\infty,
\end{equation}
justifying the application of Fubini's theorem to exchange the sum
and integral. In particular, this also shows that $M[\hat{B}](z)$
converges absolutely on $\operatorname{Re}(z)=\sigma$, verifying
that the Mellin transform of $\hat{B}$ is well-defined.

Substituting $u=(2k+1)t$ in each term of the sum now gives 
\begin{equation}
\int^{\infty}_{0}t^{z-1}F((2k+1)t)\,dt=(2k+1)^{-z}\int^{\infty}_{0}u^{z-1}F(u)\,du,
\end{equation}
so 
\begin{equation}
M[\hat{B}](z)=A(z)\,M[F](z),\qquad A(z):=\sum^{\infty}_{k=0}(2k+1)^{-z}.
\end{equation}
We have that the series $A(z)$ converges absolutely for $\operatorname{Re}(z)>1$
since $|(2k+1)^{-z}|=(2k+1)^{-\sigma}$. Note that $A(z)=(1-2^{-z})\zeta(z)$,
the Dirichlet lambda function, is nonzero for $\operatorname{Re}(z)>1$:
$\zeta(z)=\prod_{p}(1-p^{-z})^{-1}$ is nonzero and converges absolutely
for $\operatorname{Re}(z)>1$, and $|2^{-z}|=2^{-\operatorname{Re}(z)}<1$
for $\operatorname{Re}(z)>1$ implies $1-2^{-z}\neq0$ for $\operatorname{Re}(z)>1$.\\
 \hspace*{0.5cm}Since $M[\hat{B}](z)$ is determined by $\hat{B}$,
and $A(z)$ is known and nonzero, we can conclude that $M[F](z)$
is determined by $\hat{B}$. Moreover, since $M[\hat{B}](z)=A(z)M[F](z)$
converges absolutely on $\operatorname{Re(z)=\sigma}$ and $A(z)\neq0$,
it follows that $M[F](z)$ is indeed well-defined on this line.

Furthermore, 
\begin{equation}
\int^{\infty}_{0}\int^{\infty}_{B(0)}|u^{z-1}g(s)e^{-us}|ds\ du=\int^{\infty}_{0}\int^{\infty}_{B(0)}u^{\sigma-1}g(s)e^{-us}ds\ du,
\end{equation}
and since $u^{\sigma-1}g(s)e^{-us}\geq0$, Tonelli's theorem gives
us 
\begin{equation}
\int^{\infty}_{B(0)}g(s)\left(\int^{\infty}_{0}u^{\sigma-1}e^{-us}du\right)ds=\Gamma(\sigma)\int^{\infty}_{B(0)}g(s)s^{-\sigma}ds<\infty
\end{equation}
by substituting $v=us$. Hence the double integral converges absolutely
and by Fubini's theorem we have 
\begin{align*}
M[F](z) & =\int^{\infty}_{0}u^{z-1}\int^{\infty}_{B(0)}g(s)\,e^{-us}\,ds\,du\\
 & =\int^{\infty}_{B(0)}g(s)\int^{\infty}_{0}u^{z-1}e^{-us}\,du\,ds.
\end{align*}
Substituting $v=us$ in the inner integral gives 
\begin{align*}
\int^{\infty}_{0}u^{z-1}e^{-us}\,du & =s^{-z}\int^{\infty}_{0}v^{z-1}e^{-v}\,dv\\
 & =s^{-z}\,\Gamma(z).
\end{align*}
Therefore 
\begin{equation}
M[F](z)=\Gamma(z)\int^{\infty}_{B(0)}g(s)\,s^{-z}\,ds.
\end{equation}
Since $\Gamma(z)$ is nonzero for all $\operatorname{Re}(z)>0$, and
the left-hand side is determined by $\hat{B}$, we conclude that 
\begin{equation}
\tilde{M_{g}}(z):=\int^{\infty}_{B(0)}g(s)\,s^{-z}\,ds
\end{equation}
is determined by $\hat{B}$.

We now show that $g$ is uniquely determined by $\tilde{M_{g}}$.
It suffices to determine $\tilde{M_{g}}$ on the vertical line $\operatorname{Re}(z)=\sigma$,
as knowing $\tilde{M_{g}}(\sigma+i\xi)$ for all $\xi\in\mathbb{R}$
is equivalent to knowing the Fourier transform of a certain $L^{1}$
function, as we now show, and Fourier injectivity then recovers $g$
uniquely.

Write $z=\sigma+i\xi,\ \xi\in\mathbb{R}$. Substitute $s=e^{x}$,
and so $ds=e^{x}\,dx$, and let $H(x):=g(e^{x})\,e^{(1-\sigma)x}$.
Then 
\begin{align*}
\tilde{M_{g}}(\sigma+i\xi) & =\int^{\infty}_{\log B(0)}g(e^{x})\,e^{-\sigma x}\,e^{-i\xi x}\,e^{x}\,dx\\
 & =\int^{\infty}_{\log B(0)}H(x)\,e^{-i\xi x}\,dx.
\end{align*}
Extending $H$ by zero on $(-\infty,\log B(0))$, we see that this
is the Fourier transform of an $L^{1}(\mathbb{R})$ function, since
\begin{equation}
\int^{\infty}_{-\infty}|H(x)|\,dx=\int^{\infty}_{\log B(0)}g(e^{x})\,e^{(1-\sigma)x}\,dx=\int^{\infty}_{B(0)}g(s)\,s^{-\sigma}\,ds<\infty,
\end{equation}
where the last equality uses the substitution $s=e^{x}$, and the
growth assumption on B. By the uniqueness theorem for Fourier transforms
on $L^{1}(\mathbb{R})$, two $L^{1}$ functions with the same Fourier
transform agree almost everywhere; since $H$ is continuous (follows
from $f$ and $e^{(1-\sigma)x}$ being $C^{1}$, and the limits at
$\log B(0)$ from the left and right agreeing), this a.e. equality
is in fact equality everywhere. Hence the Fourier transform of $H$
uniquely determines $H$ everywhere, and hence $g$ everywhere on
$[B(0),\infty)$.

We have recovered $g(s)=f'(s)\,f(s)\,s$. Observing that 
\begin{equation}
g(s)=\frac{s}{2}\,\frac{d}{ds}\!\left[f(s)^{2}\right]
\end{equation}
and $f(B(0))=0$, we integrate to obtain 
\begin{equation}
f(s)^{2}=2\int^{s}_{B(0)}\frac{g(u)}{u}\,du,
\end{equation}
which determines $f=B^{-1}$ and hence $B=f^{-1}$ uniquely.

The result and general proof structure can be adapted to the 3-dimensional
case where instead one wishes to show that $B$ is uniquely determined
by 
\begin{equation}
\tilde{B}(t)=\frac{1}{\sqrt{t}}\int^{\infty}_{0}\frac{B(r)\,r^{2}}{e^{tB(r)}-e^{-tB(r)}}\,dr.
\end{equation}

The $t^{-1/2}$ factor just shifts the Mellin transform argument by
$-\tfrac{1}{2}$, and $g$ has an extra power of $f$. Additionally
the assumption on $B$ changes slightly to 
\begin{equation}
\int^{\infty}_{0}r^{2}\,B(r)^{1-\sigma}\,dr<\infty,
\end{equation}
for some $\sigma>\frac{3}{2}$, a growth condition on $B$ of a similar
kind as before.

Again one makes the substitution $s=B(r)$: 
\begin{equation}
\widetilde{B}(t)=\frac{1}{\sqrt{t}}\int^{\infty}_{B(0)}\frac{f(s)^{2}\,s}{e^{ts}-e^{-ts}}\,f'(s)\,ds=\frac{1}{\sqrt{t}}\int^{\infty}_{B(0)}\frac{g(s)}{e^{ts}-e^{-ts}}\,ds,
\end{equation}
where now $g(s):=f'(s)\,f(s)^{2}\,s$ (note the extra power of $f$).
The new growth condition 
\begin{equation}
\int^{\infty}_{0}r^{2}\,B(r)^{1-\sigma}\,dr<\infty
\end{equation}
for some $\sigma>\tfrac{3}{2}$ is equivalent to 
\begin{equation}
\int^{\infty}_{B(0)}g(s)\,s^{-\sigma}\,ds<\infty.
\end{equation}
The geometric series step is unchanged, giving 
\begin{equation}
\widetilde{B}(t)=t^{-1/2}\sum^{\infty}_{k=0}F\!\left((2k+1)t\right),
\end{equation}
and the extra factor of $t^{-1/2}$ shifts the Mellin transform by
$-\tfrac{1}{2}$: 
\begin{equation}
M[\widetilde{B}](z)=\int^{\infty}_{0}t^{(z-\frac{1}{2})-1}\cdot\sum^{\infty}_{k=0}F\!\left((2k+1)t\right)dt,
\end{equation}
The rest of the argument follows, with $z$ replaced by $z-\tfrac{1}{2}$
throughout, and one needs $\operatorname{Re}(z)-\tfrac{1}{2}>1$,
i.e.\ $\operatorname{Re}(z)>\tfrac{3}{2}$, for convergence. The
same conclusion is reached, namely that $\int^{\infty}_{B(0)}g(s)\,s^{-z}\,ds$
is determined by $\widetilde{B}$, and Fourier inversion gives $g$.

To recover $B$ from $g$, now $g(s)=f'(s)\,f(s)^{2}\,s$, we note
that 
\begin{equation}
g(s)=\frac{s}{3}\,\frac{d}{ds}\!\left[f(s)^{3}\right],
\end{equation}
and integrating gives us 
\begin{equation}
f(s)^{3}=3\int^{s}_{B(0)}\frac{g(u)}{u}\,du.
\end{equation}
This uniquely recovers $f$ and hence $B=f^{-1}$.

\begin{rem} The assumption $B(0)>0$ is imposed to avoid dealing
with the behaviour of the Mellin integrals at $s=0$. We expect the
method to remain valid under suitable assumptions allowing $B(0)=0$,
for example $B(r)\sim c\,r^{m}$ as $r\to0$ for some $c>0$ and $m>0$.
Establishing this would require a separate analysis of the small-$s$
behaviour of the Mellin transforms. \end{rem}

\bibliographystyle{siam}
\bibliography{biblio}

\end{document}